\newcommand{\N}{\mathbb{N}}
\newcommand{\R}{\mathbb{R}}
\renewcommand{\div}{{\rm div}\,}
\newcommand{\loc}{{\rm loc}}
\newcommand{\Qone}{{\mathcal{Q}}}
\newtheorem{theorem}{Theorem}
\newtheorem{lemma}[theorem]{Lemma}
\theoremstyle{definition}
\theoremstyle{remark}
\newtheorem{remark}[theorem]{Remark}
\newcommand{\RR}{\mathbb{R}}
\newcommand{\NN}{\mathbb{N}}
\begin{document}
\title[Loss of regularity for the
continuity equation]{Loss of regularity for the
continuity equation \\ with non-Lipschitz velocity field}

\author{Giovanni Alberti}
\address{Giovanni Alberti: Dipartimento di Matematica,
Universit\`a di Pisa, largo Pontecorvo 5,
56127 Pisa, Italy}
\email{giovanni.alberti@unipi.it}

\author{Gianluca Crippa}
\address{Gianluca Crippa: Departement Mathematik und Informatik, Universit\"at Basel,
Spiegelgasse 1, CH-4051, Basel, Switzerland}
\email{gianluca.crippa@unibas.ch}

\author{Anna L.~Mazzucato}
\address{Anna L.~Mazzucato: Department of Mathematics,
Penn State University, 
University Park, PA 16802, USA}
\email{alm24@psu.edu}

\begin{abstract}
We consider the Cauchy problem for the continuity equation  in space\break dimension ${d \geq 2}$. We construct a divergence-free velocity field uniformly  bounded in all Sobolev spaces $W^{1,p}$, for $1 \leq p<\infty$, and a smooth compactly supported initial datum
such that the unique solution to the continuity equation with this initial datum 
and advecting field does not belong to any Sobolev space of positive 
fractional order at any positive time.
We also construct velocity fields in 
$W^{r,p}$, with $r>1$, and solutions of the continuity equation with these 
velocities that exhibit some loss of regularity, as long as the Sobolev 
space $W^{r,p}$ does not embed in the space of Lipschitz functions.
Our constructions are based on examples of optimal mixers from the companion paper  {\em Exponential self-similar mixing by incompressible flows} (J. Amer. Math. Soc., DOI:https://doi.org/10.1090/jams/913), and have been announced in {\em Exponential self-similar mixing and loss of regularity for continuity equations} (C.~R.~Math.~Acad.~Sci.~Paris, {\bf 352} (2014), no.~11).
\end{abstract}

\keywords{continuity and transport equations; mixing; loss of regularity; regular Lagrangian flows}

\subjclass[2010]{76F25, 35B65, 37C10, 35Q35}

\maketitle

\section{Introduction}

We consider  the Cauchy problem for the continuity
equation in $d$ space dimensions:
\begin{equation}\label{e:continuity}
\left\{ \begin{array}{l}
\partial_t \rho + \div (u \rho) = 0 \\
\rho(0,\cdot) = \bar \rho \,,
\end{array}\right.
\end{equation}
where  the velocity field $u = u(t,x) : \R_+ \times \R^d \to \R^d$ and the
datum ${\bar \rho = \bar \rho(x) : \R^d \to \R}$ are given, and  $\rho =
\rho(t,x) : \R_+ \times \R^d \to \R$. Such an equation arises in different contexts and it models conservation of the scalar quantity  $\rho$ under the dynamics generated by the velocity field $u$. Often, as in the case of incompressible fluids, the (distributional) spatial divergence $\div u$ vanishes, and it is well known that, among other factors,  the behavior 
of solutions of  \eqref{e:continuity} depends on controlling the growth of the divergence of $u$. In this work we restrict to velocity fields that are divergence free,  in which case the continuity equation coincides with the transport
equation 
\begin{equation}\label{e:transport}
\partial_t \rho + u \cdot \nabla \rho = 0.
\end{equation}
Further assumptions on the velocity field $u$ will be specified  later in the paper.

The focus of this work is to investigate the regularity of the solution $\rho$ at time $t$, given a certain regularity on the initial datum $\bar\rho$ and conditions on the velocity, such a uniform-in-time bound on norms of $u$ and its derivatives. In particular, we are interested in determining what type of loss of regularity, if any, occurs in this setting. 

The question whether initial regularity is propagated for solutions of 
equation \eqref{e:continuity} has a classical and simple answer when the 
velocity field is
regular enough, namely Lipschitz continuous with respect to space uniformly with
respect to time:
\begin{equation}
\label{e:lipschitz}
| u(t,x) - u(t,y) | \leq L |x-y|,
\end{equation}
for any $t \geq 0$ and $x$, $y \in \R^d$, for some constant $L>0$ depending on
$u$, but not on $t,\,x,\,y$. The infimum over all $L$ satisfying the above
estimate is called the Lipschitz constant of $u$ and denoted by Lip$(u)$.

With this regularity on the velocity field,  the classical Cauchy-Lipschitz theory
applies, and  the Cauchy problem for the continuity
equation~\eqref{e:continuity} has a unique solution, which is moreover
transported by the unique flow of the velocity field $u$, that is,
\begin{equation}
\label{e:transported}
\rho (t,x) = \bar \rho \big( X(t,\cdot)^{-1} (x) \big) \,,
\end{equation}
where the flow $X = X(t,x) : \R_+ \times \R^d \to \R^d$ is the solution of
\begin{equation}
\label{e:ode}
\begin{cases}
\dot X(t,x) = u(t,X(t,x)) \\
X(0,x) = x \,.
\end{cases}
\end{equation}
Therefore, the solution $\rho$ at time $t$ has the same regularity as the initial datum $\bar\rho$ provided the flow $X$ is regular enough. In particular, Gr\"onwall's  estimate ensures that the flow and its inverse are Lipschitz
continuous with respect to space with a bound on their  Lipschitz constant that
is exponential in time and in~$L$, where $L$ is as the 
constant appearing in~\eqref{e:lipschitz}:
\begin{equation}
\label{e:gronwall}
{\rm Lip}\, \big(X(t,\cdot)\big) \leq \exp(tL) \,, \qquad
{\rm Lip}\, \big(X(t,\cdot)^{-1}\big) \leq \exp(tL).
\end{equation}
 In particular, from~\eqref{e:transported} it follows that
\begin{equation}
\label{e:propagated}
{\rm Lip}\, \big( \rho(t,\cdot) \big) \leq {\rm Lip}\, (\bar \rho) \, \exp(tL)
\,,
\end{equation}
and an analogous result holds for the H\"older regularity of the solution.
Alternatively, the exponential bound in~\eqref{e:propagated} can be proved by 
performing simple energy
estimates directly on   the continuity equation~\eqref{e:continuity}.

%\medskip

In many situations, for example when $u$ is a solution of the Navier-Stokes and Euler equations of fluid mechanics,  the velocity field is not Lipschitz continuous, but still retains some regularity in terms of weak or distributional derivatives.
In this setting, a theory of well posedness in Lebesgue spaces $L^p$, $1\leq p \leq \infty$, for the continuity
equation~\eqref{e:continuity} and the ordinary differential
equation~\eqref{e:ode} has been developed, starting with the seminal works by
DiPerna-Lions~\cite{DL} and Ambrosio~\cite{A}. In these two papers, the velocity
field is assumed to have, respectively, Sobolev or $BV$ regularity with respect
to the spatial variables, with an integral dependence on time.
(By $BV$ regularity we mean that the velocity is of bounded variation, and by Sobolev regularity we mean that derivatives of $u$, of possibly fractional order, have a certain integrability in space.)
Then one can prove existence, uniqueness, and stability of weak
solutions of the partial differential equation (PDE)~\eqref{e:continuity}, and of suitable flows
(the so-called {\em regular Lagrangian flows\/}), solutions of the ordinary
differential equation (ODE)~\eqref{e:ode}. (See~\cite{notes} for a
recent survey on research in this area.)

This well-posedness result makes it rather natural to ask whether any propagation
of regularity for solutions of~\eqref{e:continuity} holds when the advecting 
field has only the Sobolev or $BV$ regularity assumed in~\cite{A,DL}. A positive answer to 
this question would be very relevant in particular for applications to 
non-linear problems, as propagation of regularity would imply strong 
compactness of solutions, allowing the use of approximation schemes for 
non-linear problems that include continuity equations. This is  a rather delicate question 
because the flow map may be discontinuous.

%\medskip

To the best of our knowledge, the literature addressing propagation of regularity in the framework described above  is rather
limited. Several results of classical flavor for ``almost Lipschitz'' velocity fields (i.e., with a
modulus of continuity satisfying the Osgood condition) are collected for
instance in~\cite{chemin}. Whether or not any propagation of regularity holds for less regular fields again seems  a rather subtle question that, in particular, will depend on dimension. For instance, in one space dimension it is not difficult  to see that $BV$ regularity of the initial datum is propagated in time for solutions 
of the transport equation with a Sobolev velocity field, without additional
conditions on its divergence. By contrast,  in~\cite{colrausch} the authors show that, in two space dimensions, neither 
$BV$ regularity nor continuity of the data are preserved under the flow,
if the  velocity field lies in the intersection of a Sobolev and a H\"older space.
Their construction is based on the lack of pointwise uniqueness for solutions
of~\eqref{e:ode}, which can be used to generate trajectories that collapse quickly  by  time reversion.
See also~\cite{Figalli} for a related analysis concerning $H^{3/2}$ velocity
fields on the circle. 

The main result of this work implies that,  in two or more space dimensions, the Sobolev regularity of initial datum is not necessarily preserved in time when the velocity is not Lipschitz continuous. The loss of regularity is particularly 
striking when the velocity field has only one derivative in $L^p$, for $1\leq p<\infty$.
In this case, it is possible for smooth  initial data to loose all Sobolev
regularity, even of fractional order,  instantaneously at $t=0_+$. One should contrast this result with the fact that, within the Ambrosio-DiPerna-Lions theory, the Lebesgue $L^p$ norms of the initial datum are all exactly conserved in time, if $u$ is divergence free. The divergence-free constraint on the velocity field is a strong constraint as it implies the flow of $u$ preserves the Lebesgue measure, so blow up of the norm by concentration is more difficult or impossible to achieve. We do not expect that allowing $u$ to have a non-zero, but bounded, divergence would significantly simplify our construction. Allowing the divergence to become singular would likely lead to simpler examples of loss of regularity, but would also bring the result outside of the Ambrosio-DiPerna-Lions theory for solutions to \eqref{e:continuity}.  

This negative result on propagation of regularity in this work complements a few  positive results that have become available recently in the literature. There, (low) regularity is measured in a non-standard way, while we choose to work with traditional Sobolev spaces. The  quantitative
estimates for regular Lagrangian flows in~\cite{estimates} show that any regular
Lagrangian flow can be approximated (in the Lusin sense) with Lipschitz maps,
with a quantitative control on the Lipschitz constant of the approximation that depends 
exponentially on the size of the neglected set. This 
approximation result implies the propagation of a suitable ``Lipexp regularity''
of the initial datum (see~\cite{estimates}).  More recently, in~\cite{jabin2}  preservation  of ``a logarithm of a derivative'' for the solution of the continuity equation is established  and used  for a new theory of
existence of solutions to the compressible Navier-Stokes equations. This result
is obtained by means of a delicate argument, which utilizes the propagation of a
weighted norm of the solution, with a suitable choice of weights, via an adjoint
problem, and does not require the divergence of the velocity to be bounded.
See also~\cite{leger} for a related approach in the context 
of incompressible velocity fields, described in terms of Fourier
multipliers and proved with the use of multilinear harmonic analysis~\cite{SSS},
and the subsequent  sharp results in~\cite{hung}.

Our main result is contained in the following theorem, where $d$ again denotes the spatial dimension.

\begin{theorem}\label{t:main}
Let $d\geq 2$. There exist a bounded velocity field $v:\,[0,\infty)\times \RR^d\to \RR^d$ and a bounded 
solution $\theta:\, \,[0,\infty)\times \RR^d\to \RR$
of the continuity equation \eqref{e:continuity} with velocity $v$, 
such that:
\begin{enumerate}[label=(\roman*),ref=(\roman*)]
\item for any $1 \leq p<\infty$ the velocity field $v$ is bounded in 
$\dot{W}^{1,p}(\R^d)$ uniformly in time;
\item the initial datum $\bar \theta:=\theta(0,\cdot)$ belongs to 
$C^\infty_c(\R^d)$;
\item the solution $\theta(t,\cdot)$ does not belong to
$\dot{H}^s(\R^d)$ for any $s>0$ and $t>0$.
\end{enumerate}
Moreover both $v$ and $\theta$ have support in a fixed compact subset of $\RR^d$ for all $t\geq 0$, and
can be taken to be smooth on the complement of a point in $\R^d$.
\end{theorem}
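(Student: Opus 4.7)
The plan is to build $v$ by patching together infinitely many rescaled copies of the self-similar exponential mixer $w$ of the companion paper, each acting on one of a sequence of pairwise disjoint cubes $Q_n$ of side $\lambda_n\to 0$ accumulating at a single point $x_\infty\in\RR^d$, during pairwise disjoint time intervals $I_n=[t_{n+1},t_n]$ accumulating at $t=0$. By a change of variables the $\dot W^{1,p}$-norm of a rescaled velocity $(\lambda_n/\tau_n)\,w((t-s)/\tau_n,(x-y_n)/\lambda_n)$ equals $(\lambda_n^{d/p}/\tau_n)\|\nabla w\|_{L^p}$, which is bounded as long as $\tau_n\gtrsim\lambda_n^{d/p}$; this leaves room to pack many mixing iterations into a short total time $|I_n|\to 0$.

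From the companion paper extract a divergence-free $w$ compactly supported in $(0,1)\times[0,1]^d$ and bounded in $\dot W^{1,p}$ for all $p<\infty$, with the following amplification property for its flow $X^w$: for a fixed smooth bump $\chi$ on $[0,1]^d$ there is $L>1$ with
\[
\|\chi\circ(X^w(1,\cdot))^{-1}\|_{\dot H^s(\RR^d)}\ge L^s\|\chi\|_{\dot H^s(\RR^d)}\quad\text{for every }s>0;
\]
this is the time-reversal dual of the exponential decay of $\dot H^{-s}$-norms of a mixed datum established in the companion paper, and iterating $m$ copies of $w$ yields amplification $L^{sm}$. Fix $\lambda_n=2^{-n}$, $\tau_n=2^{-\sqrt n}$, $m_n=2^{\sqrt n/4}$, $a_n=2^{-n^2}$, and pairwise disjoint cubes $Q_n$ with centres $y_n\to x_\infty$. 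Set $|I_n|=m_n\tau_n=2^{-3\sqrt n/4}$ and $t_n=\sum_{k\ge n}|I_k|\to 0$, and define $v$ on $I_n\times Q_n$ as the concatenation of $m_n$ rescaled copies of $w$, extended by $0$ elsewhere. Then
\[
\|\nabla v(t,\cdot)\|_{L^p(\RR^d)}=(\lambda_n^{d/p}/\tau_n)\|\nabla w\|_{L^p}=2^{\sqrt n-nd/p}\|\nabla w\|_{L^p},
\]
which is uniformly bounded in $n$ (and hence in $t$) for every fixed $p<\infty$. The space-time supports accumulate only at $(0,x_\infty)$, so $v$ is divergence-free, compactly supported, and smooth off that point.

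Take the initial datum $\bar\theta(x)=\sum_n a_n\,\chi((x-y_n)/\lambda_n)$; the super-polynomial decay of $a_n$ ensures $\bar\theta\in C^\infty_c(\RR^d)$. By construction $\theta(t,\cdot)|_{Q_n}$ is frozen outside $I_n$ and is transformed by $m_n$ rescaled iterations of $w$ during $I_n$, so a direct scaling computation combined with the amplification property gives $\|\theta(t,\cdot)|_{Q_n}\|_{\dot H^s(\RR^d)}\ge a_n\lambda_n^{d/2-s}L^{sm_n}\|\chi\|_{\dot H^s}$ for every $t\ge t_n$. Localising by a smooth cutoff $\phi_n$ of $Q_n$ with the multiplier bound $\|\phi_n f\|_{\dot H^s}\lesssim\lambda_n^{-\lceil s\rceil}\|f\|_{\dot H^s}$ yields $\|\theta(t,\cdot)\|_{\dot H^s(\RR^d)}\gtrsim a_n\lambda_n^{d/2-s+\lceil s\rceil}L^{sm_n}$. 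With our parameter choices, $L^{sm_n}=L^{s\cdot 2^{\sqrt n/4}}$ grows doubly exponentially in $n$ and overwhelms the polynomial-times-super-polynomial factor $a_n\lambda_n^{d/2-s+\lceil s\rceil}=2^{-n^2-O(n)}$, so the bound $\to\infty$ with $n$. Since $t_n\to 0$, for every $t_0>0$ there are infinitely many such $n$, giving $\|\theta(t_0,\cdot)\|_{\dot H^s}=\infty$ for every $s>0$, which is exactly (iii).

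The main obstacle is the delicate simultaneous balancing of the four parameter sequences $(\lambda_n,\tau_n,m_n,a_n)$ so that $v$ lies in $\dot W^{1,p}$ uniformly for every $p<\infty$ (forcing $\tau_n$ not too small), $\bar\theta$ is $C^\infty$ at $x_\infty$ (forcing $a_n$ super-polynomially small), and the doubly-exponential amplification $L^{sm_n}$ at level $n$ nevertheless beats both the small amplitude $a_n$ and the small spatial scale $\lambda_n$. A subtler preliminary point is extracting the $\dot H^s$-amplification of $w$ on smooth bumps from the companion paper's mixing statement (which concerns the decay of $\dot H^{-s}$-norms of a specific non-smooth datum); the bridge is time-reversibility of the continuity equation together with the self-similar structure of the mixer.
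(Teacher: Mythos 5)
Your overall architecture is the same as the paper's: rescale the companion paper's exponential mixer into disjoint cubes $Q_n$ of side $\lambda_n$ accumulating at a point, give the $n$-th piece a super-polynomially small amplitude so that the initial datum is $C^\infty_c$, accelerate the mixing on the $n$-th block fast enough that the resulting amplification of positive Sobolev norms beats the decay of the amplitudes, and pass from the blow-up of a single localized piece to blow-up of the full sum. Your variant with disjoint time windows $I_n$ accumulating at $t=0$ (rather than the paper's simultaneous action of all blocks with time rescaled by $\tau_n$) is legitimate and even simplifies the velocity bound, since only one block is active at each time. However, there is one genuine gap: the ``amplification property'' $\|\chi\circ(X^w(1,\cdot))^{-1}\|_{\dot H^s}\ge L^s\|\chi\|_{\dot H^s}$ for an \emph{arbitrary} smooth bump $\chi$, iterable to $L^{sm}$, is not what the companion paper provides, and your proposed bridge via ``time-reversal'' does not deliver it. Time-reversing the continuity equation converts the decay of $\dot H^{-s}$ norms of the forward solution into \emph{growth of $\dot H^{-s}$ norms} of the reversed solution; it says nothing about positive norms, and it certainly does not give a one-step constant valid for every bump (note also that after one step the transported datum is no longer a bump of the original form, so the iteration is not justified either). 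The correct and elementary bridge, which is the one the paper uses (Remark~\ref{r:gronwall}), is the interpolation inequality $\|\rho\|_{L^2}^2\le\|\rho\|_{\dot H^{-s}}\|\rho\|_{\dot H^{s}}$ combined with conservation of the $L^2$ norm: this yields $\|\rho(t,\cdot)\|_{\dot H^s}\ge C_s e^{sct}$ for the \emph{specific} solution $\rho$ of Theorem~\ref{t:mixing}, for all $t$ at once. You should therefore take $\chi=\rho(0,\cdot)$ and run that specific mixer for time $m_n$ (rather than concatenating $m_n$ independent copies acting on an arbitrary bump); with that replacement the rest of your quantitative bookkeeping goes through.

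A secondary, fixable issue is the localization step. The multiplier bound $\|\phi_n f\|_{\dot H^s}\lesssim\lambda_n^{-\lceil s\rceil}\|f\|_{\dot H^s}$ is false for homogeneous norms as stated (take $f$ constant: the right-hand side vanishes, the left does not); the correct estimate carries an additional term of the form $\lambda_n^{-s}\|f\|_{L^2}$. Since $\|\theta(t,\cdot)\|_{L^2}$ is uniformly bounded and the correction is only polynomial in $\lambda_n^{-1}$ while your amplification is doubly exponential in $n$, the conclusion survives, but the step needs to be restated. The paper's rigorous substitute is the almost-orthogonality inequality of Lemma~\ref{l:orto}, proved via the Gagliardo representation of the $\dot H^s$ seminorm for $0<s<1$ (which suffices, since failure to lie in $\dot H^s$ for small $s$ implies failure for all larger $s$ by interpolation with the bounded $L^2$ norm).
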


The main strategy in the proof of Theorem \ref{t:main} above is to obtain sustained (exponential) growth of Sobolev norms with positive exponent from decay to zero of Sobolev norms with negative exponent by means of an interpolation inequality and conservation of the $L^p$ norms for solutions of \eqref{e:continuity}. The decay of negative Sobolev norms is, in turn, a consequence of results on optimal mixing, obtained by the authors \cite{ACM2}. The other main ingredient in the proof is a speed up of the growth of the Sobolev norm that eventually leads to instantaneous blow up, achieved via a suitable rescaling argument. We discuss  these ideas in more details later in the Introduction.

Theorem \ref{t:main} is stated in term of homogeneous Sobolev 
spaces $\dot{W}^{s,p}$ and $\dot{H}^s\equiv\dot{W}^{s,2}$ (defined below in Appendix~\ref{s:prelim}), given that we employ rescaling in the proof  and  homogeneous Sobolev norm have the sharpest behavior under rescaling as opposed to non-homogeneous spaces. In our results, it is equivalent to work with homogeneous or inhomogeneous norms, since in 
the example we construct both the velocity field and the solution  are uniformly bounded.
Hence the inhomogeneous norm will be bounded if and only if the homogeneous 
norm is bounded. Additionally, we state the loss of regularity for the solution 
only in terms of $L^2$-based Sobolev spaces $H^s$, $s\in \R_+$. While these 
norms have physical meaning, using $L^2$-based Sobolev norms is a technical 
restriction, which allows us to use equivalent Gagliardo-type norms for the 
fractional spaces $H^s$, $0<s<1$. These norms in turn imply an almost 
orthogonality property (see Lemma \ref{l:orto}) in these spaces. We expect that 
a statement analogous to that of Theorem \ref{t:main} can be made concerning 
blow-up of $L^p$-Sobolev norms, $1<p<\infty$.

\begin{remark} \label{r:ODE}
We work directly with the continuity equation, rather than 
with the flow induced by the advecting velocity, that is, we employ directly 
the PDE and we do not explicitly refer to the underlying ODE for the flow.  We can, however, draw the following conclusions regarding the flow of the velocity $u$.
The dramatic loss of regularity in Theorem \ref{t:main} indicates a severe lack of 
continuity of the
solution map in Sobolev spaces. Indeed, the inverse flow map $X^{-1}(t,\cdot)$ 
cannot be in any $\dot H^s (\R^d)$, with $s>0$, for $t>0$, otherwise the solution would retain 
Sobolev regularity from formula \eqref{e:transported} as the initial data is 
smooth. The representation formula \eqref{e:transported} is still valid, though 
not pointwise, for the unique weak and renormalized solution to 
\eqref{e:continuity}. In a recent paper \cite{jabin1}, Jabin gives in fact an 
example of a 
velocity field in the Sobolev space $W^{1,p}(\R^d)$, for $1\leq p < \infty$, with respect 
to the space variables, for which the associated regular
Lagrangian flow is not in $W^{1,\tilde p}(\R^d)$ for any $\tilde p$. The proof exploits a randomization 
argument in the choice of the rotations of certain basic blocks. In fact,
Jabin observed that his construct gives  the same lack of
regularity for the flow in case of velocity fields in $W^{r,p}(\R^d)$, provided $p<\frac{d}{r-1}$,
where $r$ is even allow to be larger than $1$ and $p$ to be smaller than $1$ (cf. Theorem~\ref{t:small} below). 
\end{remark}

\begin{remark}\label{r:hung}
Recently, it was shown in~\cite{hung} that adapting the proof of Theorem~\ref{t:main} and exploiting  suitable
logarithmic interpolation inequalities  leads to
examples of loss of logarithmic regularity for the solution of
the continuity equation. 
\end{remark} 

The proof of the main result in this work is based on a previous construction~\cite{ACM2} (see also~\cite{ACM1}) of optimal mixers in  the context of two-dimensional incompressible flows under Sobolev bounds on 
the velocity. Informally, mixing of $\rho$ amounts to 
creation of stretching and filamentation that accounts for 
a small negative Sobolev norm of the solution. Since the $L^2$ norm of the solution 
is preserved under the flow, by interpolation positive Sobolev norms have to 
be large. The strategy of proof has two main steps:
\begin{enumerate} [leftmargin=*,label={\bf Step \arabic*.},ref={Step 
\arabic*}]
\item Construction of a basic element that saturates the Gr\"onwall 
estimate~\eqref{e:gronwall}--\eqref{e:propagated} in integral sense: positive Sobolev norms 
of the solution grow exponentially in time. This construction is essentially 
taken from our previous paper~\cite{ACM2}. \label{i:step1}
\item Iteration and scaling, which turn the exponential growth into an 
instantaneous blow up for the positive Sobolev norms of the solution, 
while still keeping a control on the regularity of the velocity field and the 
initial datum. \label{i:step2}
\end{enumerate}
We notice that it is straightforward to saturate the Gr\"onwall estimate for \ref{i:step1} above in pointwise 
sense. The relevance of our example is, instead, to saturate it in 
integral sense.

As a matter of fact, one can prove a complementary result to that in Theorem 
\ref{t:main} in the setting of velocity fields with higher regularity, showing 
loss of some regularity on the solution. By higher regularity, we mean here  that the velocity field belongs uniformly in time to  
a Sobolev space $W^{r,p}$ with $r>1$, as long as this space does not embed in the 
Lipschitz class. 

\begin{theorem}\label{t:small}
Assume $d\geq 2$. Let $r > 1$ and $1 \leq p < \infty$ be such that $p< \frac{d}{r-1}$.
Let $T>0$ and $\sigma>0$ be fixed.
Then there exist a bounded velocity field $v$ and a solution $\theta$ 
of the continuity equation \eqref{e:continuity} with velocity $v$, both defined 
for $0 \leq t \leq T$,  such 
that:
\begin{enumerate}[label=(\roman*),ref=(\roman*)]
\item the velocity field $v$ is bounded in $\dot{W}^{r,p}(\R^d)$ uniformly 
in time;
\item the initial datum $\bar \theta:=\theta(0,\cdot)$ belongs to 
$\dot{H}^\sigma(\R^d)$;
\item the solution $\theta(t,\cdot)$ does not belong to
$\dot{H}^\sigma(\R^d)$ for any $t>0$;
\item both $v$ and $\theta$ are compactly supported in space, and
can be taken smooth on the complement of a point in $\R^d$.
\end{enumerate}
Furthermore, there is $\bar\mu = \bar\mu(r,p,d)<1$ such that
\begin{enumerate}[label=(\roman*),ref=(\roman*),start=4]
\item[(v)] for any $\mu>\bar \mu$ the solution $\theta(T,\cdot)$ does not belong to
$\dot{H}^{\mu\sigma}(\R^d)$ at the final time $T$.
\end{enumerate}
\end{theorem}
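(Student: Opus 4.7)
The plan is to follow the same two-step architecture used in the proof of Theorem~\ref{t:main}. The building block (Step~1) is the self-similar mixer from~\cite{ACM2}: on a fixed bounded region it provides a smooth divergence-free velocity $u$ and a smooth mean-zero datum $\rho_0$ whose advected solution satisfies an exponential decay of the $\dot H^{-s}$ norm while the $L^2$ norm is conserved. Combined with the interpolation inequality $\|\rho\|_{L^2}^{s+\sigma}\leq\|\rho\|_{\dot H^{-s}}^{\sigma}\|\rho\|_{\dot H^{\sigma}}^{s}$, this forces exponential growth of $\|\rho(t,\cdot)\|_{\dot H^\sigma}$, so the basic element again saturates the Gr\"onwall estimate in integral sense.

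The new ingredient is the scaling of $\dot W^{r,p}$ under the natural rescaling of~\eqref{e:continuity}: writing $u_{\lambda,\tau}(t,x)=(\lambda\tau)^{-1}u(t/\tau,\lambda x)$ and $\rho_{\lambda,\tau}(t,x)=\rho(t/\tau,\lambda x)$, one checks that $\|u_{\lambda,\tau}(t,\cdot)\|_{\dot W^{r,p}}\lesssim\tau^{-1}\lambda^{r-1-d/p}$. The assumption $p<d/(r-1)$ makes $r-1-d/p$ strictly negative, which is exactly the room one needs to pick scales $\lambda_n\to\infty$ and time windows $\tau_n\sim\lambda_n^{r-1-d/p}\to 0^+$ while keeping each rescaled mixer uniformly bounded in $\dot W^{r,p}$. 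Mimicking Step~2 for Theorem~\ref{t:main}, I would place disjoint rescaled copies in shrinking balls $B_n$ clustering at a single point, active over non-overlapping windows $[t_n,t_n+\tau_n]$ with $t_n\to 0^+$, and take as initial datum a superposition $\bar\theta=\sum_n a_n\phi(\lambda_n(\cdot-x_n))$ with amplitudes $a_n$ tuned, through the almost-orthogonality property of Lemma~\ref{l:orto}, so that $\bar\theta\in\dot H^\sigma$. Items~(i), (ii), and (iv) then follow from disjointness of supports and the clustering of the $B_n$ at the singular point.

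Item~(iii) is obtained because at any $t>0$ infinitely many of the rescaled mixers have already completed their action; by Step~1 each contributes to $\|\theta(t,\cdot)\|_{\dot H^\sigma}$ a term that, via Lemma~\ref{l:orto}, cannot be cancelled and sums to $+\infty$. The quantitative statement~(v) is of a different nature: at the \emph{final} time $T$ only a bounded total multiplicative growth in $\dot H^\sigma$ can be sustained while respecting the uniform $\dot W^{r,p}$ budget, so interpolating the achievable decay rate of $\dot H^{-s}$ against the conservation of $L^2$ and the initial regularity $\dot H^\sigma$ yields an explicit exponent $\bar\mu=\bar\mu(r,p,d)<1$ beyond which $\dot H^{\mu\sigma}$ cannot be preserved. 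I expect the hardest step to be the simultaneous bookkeeping for $(\lambda_n,\tau_n,t_n,a_n)$ and, especially, extracting the sharp form of $\bar\mu$: the condition $p<d/(r-1)$ is only just enough to make the spatial scaling admissible, and any looser choice would spoil either the $\dot W^{r,p}$ bound or the instantaneous blow-up of $\dot H^\sigma$.
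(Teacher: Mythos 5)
Your overall architecture (rescaled copies of the exponential mixer from Theorem~\ref{t:mixing} placed in disjoint cubes accumulating at a point, almost orthogonality via Lemma~\ref{l:orto}, and interpolation to convert decay of negative norms into growth of positive norms) is the same as the paper's. But there is a genuine gap at the heart of the $r>1$ case: you only balance the \emph{spatial} scaling of the $\dot W^{r,p}$ norm. By \eqref{e:expfield}, for $r>1$ the building block itself satisfies $\| u(t,\cdot)\|_{\dot W^{r,p}}\leq B_r\exp((r-1)bt)$, i.e.\ its higher Sobolev norms grow exponentially in its own time variable (the mixer necessarily creates small scales). After the time rescaling $t\mapsto t/\tau_n$ this becomes $\exp((r-1)bt/\tau_n)$, so your choice ``$\tau_n\sim\lambda_n^{r-1-d/p}$ keeps each rescaled mixer uniformly bounded in $\dot W^{r,p}$'' is only true at $t=0$; for any fixed $t>0$ the norm of the $n$-th piece blows up as $\tau_n\to0$, destroying conclusion (i). This exponential-in-time cost is precisely why the theorem only yields loss of a fraction $\bar\mu\sigma$ of regularity by time $T$ rather than the instantaneous total loss of Theorem~\ref{t:main}. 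The paper resolves the tension by taking $\tau_n=1/n$ and $\lambda_n=e^{-\alpha Tn}$ with $\alpha\beta>(r-1)b$, where $\beta=1-r+d/p>0$, so that the spatial gain $\lambda_n^{\beta}=e^{-\alpha\beta Tn}$ dominates the temporal loss $e^{(r-1)bTn}$; the value $\bar\mu=1-c\beta/(c\beta+(r-1)b)$ then emerges from the competition between the growth exponent $sc/\tau_n$ of $\|\rho_n\|_{\dot H^s}$ in \eqref{e:expsol} and the constraint $\alpha>(r-1)b/\beta$ forced by the velocity bound. Your sketch of (v) gestures at this (``only a bounded multiplicative growth can be sustained'') but does not identify the mechanism or produce the exponent.

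A second, related problem is your design of ``non-overlapping activity windows $[t_n,t_n+\tau_n]$.'' In the paper all rescaled pieces evolve simultaneously on the whole interval $[0,T]$; the blow-up of $\|\theta(t,\cdot)\|_{\dot H^s}$ at a given $t>0$ comes from the fact that the $n$-th piece has by then run for an internal time $t/\tau_n\to\infty$, so its contribution $\gamma_n^2\lambda_n^{d-2s}\exp(2sct/\tau_n)$ in \eqref{c:D} is unbounded in $n$. If instead each mixer is active only on a window of length $\tau_n$ (internal duration $O(1)$), each piece gains at most a fixed constant factor in $\dot H^s$, the terms of the orthogonality sum stay summable, and no blow-up occurs. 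So the windowed variant, as stated, would fail to prove (iii) and (v).
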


We make a few comments elucidating the similarities and differences between the two theorems:
\begin{enumerate}[leftmargin=*,label=(\arabic*), ref=(\arabic*)] 
\item The loss of regularity in Theorem~\ref{t:small} 
is much weaker than that in Theorem~\ref{t:main}. This appears as an artifact of our strategy for the proof. In fact,
the examples of optimal mixers in~\cite{ACM2}, the building blocks in the current proof,  give exponential decay of negative norms (we refer to this situation as ``exponential mixing''), if a uniform  $L^p$-bound in time is imposed only on the gradient of the velocity. If the bound is on higher derivatives, then the mixing is polynomial in time, which is not sufficient to show instantaneous blow up of the positive norms by rescaling. One can attempt to remedy this problem   in two ways and obtain a stronger version  of Theorem~\ref{t:small}, by constructing exponential mixers with uniform bounds on higher derivatives or by constructing flows that give directly growth of positive norms for the solution. Very recently, Elgindi and Zlatos have constructed more general exponential mixers under uniform bounds in $W^{s,p}$ for $1<s<2$ and $p$ close to $2$ \cite{ElgindiZlatos2018}. (See Theorem~\ref{t:mixing} for more details 
on the examples of optimal mixers from~\cite{ACM2}.)

\item  If $r<1$ and we assume the velocity field is in 
$W^{r,p}$, then in general solutions are not unique. However, we can prove the 
existence of one solution as in Theorem~\ref{t:main}. 

\item  The solution in Theorem~\ref{t:small} is unique in $L^2$  if $p 
\geq 2$. If $\sigma>d/2$,  the solution can be taken to be bounded, and is the 
unique one for any $p \geq 1$. 

\item In the proof of both theorems we do not discuss the case $p=1$ for the velocity. This 
choice is by convenience, since we define Sobolev 
spaces  using Fourier multipliers and this definition is not equivalent to the 
definition using weak derivatives and interpolation for $p=1$ and $p=\infty$. 
Clearly, the result holds also for $p=1$ as stated by embedding, as $W^{r,p}\subset 
W^{r,1}$ for any $p>1$.
\end{enumerate}

\begin{remark}\label{r:theorem3}
Further variants of Theorem \ref{t:main} are possible. For example, by 
modifying the proof of Theorem \ref{t:small}, one can prove instantaneous loss 
of some regularity, but with further constraints on the regularity of the 
velocity field and of the initial data. Informally, the regularity index $r>1$ 
must be sufficiently close to $1$, $\sigma$ must be sufficiently large, and $s$ 
sufficiently close to $\sigma$. Furthermore, these conditions are coupled 
together.  For sake of clarity and to keep the article contained, we opt not to 
include a precise statement and a proof of these variants.
\end{remark}

The paper is organized as follows. We discuss the example of optimal mixing flows from \cite{ACM2} that constitute the building blocks for the constructive proofs of our main results in Section \ref{s:mixing}. Section \ref{s:proofs} contains the proof of Theorems \ref{t:main} and \ref{t:small}. In Appendix \ref{s:prelim}, we briefly recall main facts about fractional Sobolev spaces and present an almost orthogonality result for functions in these spaces with well separated supports, which yields a key lemma for the iteration step in the proof of the main theorem. 

\subsection*{Acknowledgments}
The authors thank the anonymous referees for a careful reading of the manuscript and insightful comments. 
This work was started during a visit of the first
and third authors at the University of Basel.
Their stay has been partially supported by the
Swiss National Science Foundation grants 140232 and 156112.
The visits of the second author to Pisa have been supported by the University of Pisa PRA
project ``Metodi variazionali per problemi geometrici [Variational Methods
for Geometric Problems]''.
The second author was partially supported by the
ERC Starting Grant 676675 FLIRT and the third author by the
US~National Science Foundation grants DMS-1312727 and DMS-1615457.

\section{An example of exponential mixing} \label{s:mixing}

As outlined in the Introduction, the first step in proving our main result, 
Theorem \ref{t:main}, is the construction of a regular velocity field $u$ and 
of a regular solution $\rho$ to the continuity equation with velocity $u$ such 
that $u$ is Lipschitz uniformly in time and the positive Sobolev norms of 
$\rho$ grow exponentially in time. 

Given that all Lebesgue norms of $\rho$ are constant in time,  one way to 
achieve the exponential growth in time of the positive Sobolev norms is to 
ensure an exponential decay of the negative norms in view of the interpolation 
inequality \eqref{e:interpolation}. Negative Sobolev norms will decay in time 
if the flow of $u$ is sufficiently ``mixing'', which informally means that the 
action of the flow generates small scales in the solution. It was recently 
shown \cite{ikx,seis} that, under a $W^{1,p}$ bound on $u$ that is uniform in time, 
where $1<p\leq \infty$, the rate of decay of negative Sobolev norms for 
the solution $\rho$  is indeed at most exponential in time. 

In \cite{ACM2}, we constructed examples of velocity fields satisfying the 
required $W^{1,p}$ bound and solutions to the associated continuity equation that 
saturate the exponential rate of decay of the negative norms (see 
\cite{ElgindiZlatos2018,zlatos} for related examples). We will refer to these examples as 
``optimal mixers''. We also consider velocity fields with higher regularity, 
see~\eqref{e:expfield} and \eqref{e:expmix} below.
Our examples are based on a quasi self-similar geometric construction.

\begin{theorem}[\protect\cite{ACM2}]\label{t:mixing}
Assume $d\geq 2$ and let $\Qone \subset \R^d$ be the open cube with unit side 
centered at the origin of $\R^d$. There exist a velocity field $u \in C^\infty ( [0,+\infty[ \times \R^d)$ and
a corresponding (non trivial) solution $\rho \in C^\infty ( [0,+\infty[ \times 
\R^d)$ of the continuity equation \eqref{e:continuity} such that:
\begin{enumerate}[label=(\roman*),ref=(\roman*)]
\item $u(t,\cdot)$ is bounded, divergence-free, and compactly supported in 
$\Qone$ for any $t$;
\item $\rho(t,\cdot)$ has zero average and is bounded and compactly 
supported in $\Qone$ for any $t$;
\item $u(t,\cdot)$ belongs to ${\rm Lip}\,(\R^d)$ uniformly in time;
\item for any $r \geq 0$ and $1 \leq p \leq \infty$, there exist constants 
$b>0$ and $B_r > 0$ such that
\begin{equation}\label{e:expfield}
\| u(t,\cdot) \|_{\dot{W}^{r,p}(\R^d)} \leq B_r \exp \big( (r-1) \, b t \big)\,;
\end{equation}
\item for any $0 < s < 2$, there exist constants $c>0$ and $\hat{C}_s >0$ 
such that
\begin{equation}\label{e:expmix}
\| \rho(t,\cdot) \|_{\dot{H}^{-s}(\R^d)} \leq \hat{C}_s \exp(- s c t)
\qquad t \geq 0 \,.
\end{equation}
\end{enumerate}
\end{theorem}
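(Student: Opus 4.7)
The plan is to realize the pair $(u,\rho)$ as a cascade obtained by iterating a single self-similar mixing block across dyadic spatial scales, after which the required $\dot{W}^{r,p}$ growth and $\dot{H}^{-s}$ decay follow from direct scaling computations. First I would construct the \emph{base building block}: a smooth, time-dependent, divergence-free velocity field $w(t,\cdot)$ compactly supported in the open unit cube $\Qone$, together with a smooth zero-average density $\rho_0$ also supported in $\Qone$, such that over a time interval $[0,T]$ the flow of $w$ transports $\rho_0$ exactly onto the union of $2^d$ appropriately reflected/translated copies of $\rho_0(2\,\cdot\,)$ sitting in the $2^d$ dyadic half-side sub-cubes. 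A natural realization uses two successive ``cut and stack'' shear flows along orthogonal coordinate directions, smoothed near $\partial\Qone$ by divergence-free cutoffs. To enable smooth matching in time between successive scales I would further require $w$ and all its $t$-derivatives to vanish at $t=0$ and $t=T$.

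Given $(w,\rho_0)$, I define $u$ and $\rho$ by \emph{rescaling and iteration}. Let $\{\Qone_{n,j}\}_{j=1}^{2^{nd}}$ be the dyadic partition of $\Qone$ into sub-cubes of side $2^{-n}$ centered at $x_{n,j}$. On the time window $[nT,(n+1)T]$, inside each $\Qone_{n,j}$ I set
\begin{equation*}
u(t,x) := 2^{-n}\,w\bigl(t-nT,\,2^n(x-x_{n,j})\bigr),
\end{equation*}
and extend by zero elsewhere, with the analogous ``parallel across sub-cubes'' prescription for $\rho$. This is the unique scaling that leaves the Lipschitz norm invariant, since $\nabla_x u$ equals $(\nabla w)$ evaluated at the rescaled argument; the change of variables $z=2^n(x-x_{n,j})$ in the ODE reduces the flow inside $\Qone_{n,j}$ to the base flow of $w$, so by induction $\rho((n+1)T,\cdot)$ is exactly the self-similar copy of $\rho_0$ at scale $2^{-(n+1)}$ across the $2^{(n+1)d}$ sub-cubes. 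A one-line change-of-variables computation then gives
\begin{equation*}
\|u(t,\cdot)\|_{\dot{W}^{r,p}(\R^d)} \,=\, 2^{n(r-1)}\,\|w(t-nT,\cdot)\|_{\dot{W}^{r,p}(\Qone)},
\end{equation*}
where the number $2^{nd}$ of sub-cubes exactly cancels the Jacobian $2^{-nd}$. For $t\in[nT,(n+1)T]$ this yields $B_r\exp\bigl((r-1)bt\bigr)$ with $b=(\ln 2)/T$, proving \eqref{e:expfield}; the Lipschitz bound is the case $r=1$.

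For the negative Sobolev decay, at time $nT$ the density $\rho(nT,\cdot)$ is a sum of $2^{nd}$ disjointly supported, uniformly bounded, zero-average copies of $\rho_0$ rescaled to size $2^{-n}$. Pairing against a test function in $\dot{H}^{s}$ and exploiting both the zero-average cancellation and the oscillation structure of $\rho_0$ at scale $2^{-n}$ (via a Taylor expansion of the test function around each sub-cube center, combined with a local fractional Poincar\'e inequality) yields $\|\rho(nT,\cdot)\|_{\dot{H}^{-s}(\R^d)}\lesssim 2^{-ns}$ for $0<s<2$, which on the window $[nT,(n+1)T]$ interpolates to $\hat{C}_s\exp(-sct)$ with $c=(\ln 2)/T$, giving \eqref{e:expmix}. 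The main obstacle I expect is the construction of the base block $(w,\rho_0)$ itself: $w$ must simultaneously be $C^\infty$ in $(t,x)$, divergence-free, vanish to infinite order on $\partial\Qone$ and at $t\in\{0,T\}$, have a uniformly bounded Lipschitz norm, and transport $\rho_0$ \emph{exactly} onto its self-similar halved copy at time $T$ so that the recursion across scales closes with no error term. Once this geometric engineering is in place, the global smoothness of $u$ and $\rho$ on $[0,\infty)\times\R^d$ follows from the infinite-order vanishing at the endpoints $t=nT$, and the remaining estimates are essentially free consequences of the scaling identities above.
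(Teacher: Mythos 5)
The first thing to say is that the paper does not prove Theorem~\ref{t:mixing} at all: it is imported verbatim from the companion paper \cite{ACM2}, and the only in-paper content is the remark explaining how to pass from the two-dimensional construction of \cite{ACM2} to $d$ dimensions by multiplying by cutoffs in the extra variables. Your proposal correctly reverse-engineers the strategy of \cite{ACM2} — a quasi self-similar cascade in which a base block reproduces the density at half scale, iterated on successive time windows with the amplitude scaling $2^{-n}$ that keeps $\nabla u$ bounded — and your scaling computations for \eqref{e:expfield} (via \eqref{e:scaleW}, with $b=(\ln 2)/T$) and for the heuristic $\|\rho(nT,\cdot)\|_{\dot H^{-s}}\lesssim 2^{-ns}$ are the right ones.

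However, there is a genuine gap, and it sits exactly where the theorem's content lives: the existence of the base block $(w,\rho_0)$. You postulate a smooth, divergence-free field, compactly supported in $\Qone$, whose time-$T$ flow transports $\rho_0$ \emph{exactly} onto a union of $2^d$ rescaled/reflected copies of itself, and you suggest realizing it by ``two cut-and-stack shear flows smoothed near $\partial\Qone$ by divergence-free cutoffs.'' As stated this does not work: the cut-and-stack map is discontinuous, and any smoothing of the shears destroys the exactness of the self-similar reproduction, so the recursion no longer closes without an error term that you would then have to propagate through infinitely many scales. In \cite{ACM2} this step is not a routine mollification; it requires a nontrivial result guaranteeing that a prescribed measure-preserving rearrangement of $\rho_0$ (with the correct reflections so that the configuration at scale $1/2$ is again a union of copies of $\rho_0(2\,\cdot)$) can be realized as the time-one map of a smooth divergence-free isotopy supported in the cube, together with a careful choice of $\rho_0$ and of the target configuration. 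A second, smaller gap: for the range $1<s<2$ in \eqref{e:expmix}, zero average of the sub-cube copies plus a fractional Poincar\'e inequality only yields one order of cancellation, i.e.\ the decay $2^{-ns}$ for $0<s\le 1$; to reach $s<2$ you need either vanishing first moments of $\rho_0$ (preserved by the reflections used in the cascade) or an explicit $\dot H^{-2}$ estimate, neither of which your duality argument supplies. Until the base block is actually constructed, the proposal is a correct reduction of the theorem to its hardest ingredient rather than a proof.
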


\begin{remark}\label{r:gronwall}
By the interpolation formula \eqref{e:interpolation} with $s_1 = -s$, $s_2=s$, 
and $\vartheta = 1/2$, which reads
$$
\| \rho(t,\cdot) \|_{L^2(\R^d)}
\leq
\| \rho(t,\cdot) \|^{1/2}_{\dot{H}^{-s}(\R^d)} \| \rho(t,\cdot) 
\|^{1/2}_{\dot{H}^{s}(\R^d)} \,,
$$
together with the fact that the $L^2$ norm of the solution $\rho$ is conserved 
in time, estimate \eqref{e:expmix} implies the following lower bound:
\begin{equation}\label{e:expsol}
\begin{aligned}
\| \rho(t,\cdot) \|_{\dot{H}^{s}(\R^d)}
& \geq
\| \rho(t,\cdot) \|^2_{L^2(\R^d)} \| \rho(t,\cdot) \|^{-1}_{\dot{H}^{-s}(\R^d)}
\geq
\| \rho(0,\cdot) \|_{L^2(\R^d)}^2 \hat{C}_s^{-1} \exp(s c  t) \\
& = C_s \exp(s c  t)
\quad \text{ for every $0<s<2$.}
\end{aligned}
\end{equation}
This bound can be seen as saturating  Gr\"onwall 
inequality~\eqref{e:gronwall}--\eqref{e:propagated} and will be used in the 
proof of Theorems~\ref{t:main} and~\ref{t:small}.

Estimate \eqref{e:expsol} is  weaker than  \eqref{e:expmix}, in the sense that growth of the positive norms does not imply decay of the negative norms, yet it is still enough to establish the loss of regularity. In fact, what is needed is a lower bound that implies growth of positive norms faster than polynomial, but not necessarily exponential in time.  Furthermore, blow up is a {\em local} phenomenon, while mixing is inherently a {\em non-local} property. One can then ask whether it is possible to use a simpler flow as building block for the construction in Section \ref{s:proofs}. Intuitively, a  circular-type flow, where the angular velocity is suitably chosen, could suffice. However, a preliminary analysis indicates that making this construction rigorous is rather delicate and we choose instead to exploit the work we have already done for the optimal mixers. We reserve to investigate this point further in future work.   

%We also remark that the condition that the velocity $u$ be bounded is not crucial and allowing for unbounded velocities, still satisfying the uniform Sobolev estimate, does not seem to simplify the construction. 
\end{remark}

\begin{remark} The example in~\cite{ACM2} was constructed in two space 
dimensions. However, it is not difficult to modify such construction to make it 
$d$ dimensional. One can argue as follows. Call $\tilde u(t,x_1,x_2)$ and 
$\tilde \rho(t,x_1,x_2)$ the velocity field and solution constructed 
in~\cite{ACM2}. Fix $\eta = \eta(x_3,\ldots,x_d)$ smooth, with compact support 
in $B(0,1/4) \subset \R^{d-2}$ and $\eta=1$ on $B(0,1/8)$. Moreover, fix 
$\bar\eta = \bar\eta(x_3,\ldots,x_d)$ smooth, with compact support in $B(0,1/2) 
\subset \R^{d-2}$ and $\bar\eta=1$ on $B(0,1/4)$. Then, setting
$$
u(t,x_1,\ldots,x_d) = \tilde u(t,x_1,x_2) \bar\eta(x_3,\ldots,x_d) \,, \quad
\rho(t,x_1,\ldots,x_d) = \tilde\rho(t,x_1,x_2) \eta(x_3,\ldots,x_d),
$$
yields a velocity field $u$ and solution $\rho$ defined on $\R^d$ that satisfy 
the conclusions of Theorem~\ref{t:mixing}.
\end{remark}

\section{Proof of Theorems~\ref{t:main} and \ref{t:small}} \label{s:proofs}

The main idea in the proof of both Theorems~\ref{t:main} and \ref{t:small} consists in 
constructing the velocity field $v$ and the associated solution $\theta$ by 
patching together a countable number of velocity fields $u_n$ and solutions 
$\rho_n$ saturating Gr\"onwall's inequality, the 
existence of which follows from Theorem~\ref{t:mixing} (see also
Remark~\ref{r:gronwall}),  after rescaling in both space and time, and in size 
as well for the solution, in a suitable manner. We then translate them so as 
to be supported in cubes that accumulate towards a point in space. The pairs 
$u_n$, $\rho_n$ serve as building blocks in our construction.

The choice of the rescaling parameters is the key point of the proof and is done
in such a way that the regularity of the velocity field and that of 
the initial datum for the solution can be controlled, while at the same time  
any regularity of the solution at later times is destroyed.

In the proof of Theorem~\ref{t:main}, we choose the parameters for the 
rescaling to ensure that the velocity field has Sobolev regularity 
uniformly in time, while we exploit that the rescaling in time can be chosen so 
that the rate of exponential growth of the Sobolev
norms of the solution grows enough at each step to become in the
limit an instantaneous blow up.

When $r>1$, in view of estimate \eqref{e:expfield}, the
Sobolev norm of order $r$ of the velocity field is not uniformly bounded in 
time, rather it grows exponentially in time. One then needs to balance 
this exponential growth with the exponential growth for the Sobolev norms of 
the solution, which accounts for the weaker result in Theorem~\ref{t:small}.

\subsection{A geometric construction.}
We fix a sequence $\{ \lambda_n \}$, with $\lambda_n>0$ and $\lambda_n \downarrow 0$, to be determined later.
For every $n\in \NN$, we consider an open cube $Q_n$ in $\R^d$ with side of 
length $3\lambda_n$.
The first condition we impose on $\{ \lambda_n \}$ is that
\begin{equation}\label{c:A}
\tag{${\rm A}$}
\left.\begin{array}{c}
\text{$\{ Q_n\}$ can be chosen to be contained} \\
\text{in a compact set and convergent to a point}
\end{array}\right\}
\qquad \Longleftarrow \qquad
\sum_n \lambda_n < \infty\,.
\end{equation}
If \eqref{c:A} is satisfied, we can choose the centers of the cubes so that
all cubes are contained in a compact set in $\R^d$ and accumulate to a point,
i.e., every open ball centered at such a point contains all the cubes
but finitely many. By saying that condition \eqref{c:A} is satisfied, we mean 
that the condition on the right hand side, which implies the one on the left, 
is satisfied. The same will be true for conditions \eqref{c:B}--\eqref{c:D} 
(and variants) below.

Given $u$ and $\rho$ as in Theorem~\ref{t:mixing}, we fix two more sequences 
$\{ \tau_n \}$ and $\{ \gamma_n \}$ with $\tau_n>0$ and $\gamma_n>0$ 
also to be  determined later, where $\tau_n \downarrow 0$. 
Up to a translation,  in each cube $Q_n$ we set
\begin{equation}\label{e:pieces}
u_n (t,x) = \frac{\lambda_n}{\tau_n} \, u \left( \frac{t}{\tau_n} , \frac{x}{\lambda_n} \right) \,,
\qquad
\rho_n(t,x) = \gamma_n \, \rho \left( \frac{t}{\tau_n} , \frac{x}{\lambda_n} \right) \,.
\end{equation}
It is immediate to check that $\rho_n$ is a solution of the continuity equation
with velocity field $u_n$, that is,
\begin{equation}\label{e:eqn}
\partial_t \rho_n + \div ( u_n \rho_n) = 0
\qquad \text{on $\R^d$.}
\end{equation}
Moreover, we observe that for every $n$ we have
$$
{\rm dist}\, (\text{Supp}\, \rho_n \,,\, Q_n^c) \geq \lambda_n \,,
$$
since the support of $\rho_n$ is contained in a square of side $\lambda_n$  with the same center as
$Q_n$, the side of which has length $3\lambda_n$. 

We set
\begin{equation}\label{e:sums}
v := \sum_n u_n \,,
\qquad
\theta := \sum_n \rho_n \,,
\end{equation}
where convergence of both series is taken pointwise almost everywhere.
We will prove later that, for a suitable choice of the three sequences of 
parameters, $\lambda_n$, $\tau_n$, $\gamma_n$, the function 
$\theta$ is a (weak) solution of the continuity equation
with velocity field $v$: 
\begin{equation}\label{e:sumsol}
\partial_t \theta + \div (v \theta) = 0 \,.
\end{equation}

\subsection{Regularity of the velocity field}
We estimate the $\dot{W}^{r,p}(\R^d)$ norm of the velocity
field $v$ at a fixed time $t$:
$$
\begin{aligned}
\| v(t,\cdot) \|_{\dot{W}^{r,p}(\R^d)}
& \stackrel{\eqref{e:sums}}{\leq}
\sum_n \| u_n(t,\cdot) \|_{\dot{W}^{r,p}(\R^d)}
\stackrel{\eqref{e:pieces}}{=}
\sum_n \frac{\lambda_n}{\tau_n}
\left\| u \left( \frac{t}{\tau_n} , \frac{\cdot}{\lambda_n} \right) \right\|_{\dot{W}^{r,p}(\R^d)} \\
& \stackrel{\eqref{e:scaleW}}{=}
\sum_n \frac{\lambda_n}{\tau_n} \lambda_n^{\frac{d}{p}-r}
\left\| u \left( \frac{t}{\tau_n} , \cdot \right) \right\|_{\dot{W}^{r,p}(\R^d)} \\
& \stackrel{\eqref{e:expfield}}{\leq}
\sum_n \frac{\lambda_n^{1-r+\frac{d}{p}}}{\tau_n} B_r
\exp \left( \frac{ (r-1)bt}{\tau_n} \right) \,,
\end{aligned}
$$
where we have explicitly referenced which property we are using for each 
inequality. Therefore, the fact that $v(t,\cdot)$ belongs to $\dot{W}^{r,p}(\R^d)$ is 
implied by the following condition:
\begin{equation}\label{c:B}
\tag{${\rm B}$}
v(t,\cdot) \in \dot{W}^{r,p}(\R^d)
\qquad \Longleftarrow \qquad
\sum_n \frac{\lambda_n^{1-r+\frac{d}{p}}}{\tau_n}
\exp \left( \frac{ (r-1)bt}{\tau_n} \right) < \infty \,.
\end{equation}
Moreover, the definition of $u_n$, which is given in \eqref{e:pieces}, and the 
fact that the
cubes $\{ Q_n \}$ have been chosen to be pairwise disjoint, gives the following 
implication:
\begin{equation}\label{c:B'}
\tag{$\tilde{\rm B}$}
v(t,\cdot) \in L^\infty(\R^d) \text{ uniformly in $t$}
\qquad \Longleftarrow \qquad
\left\{ \frac{\lambda_n}{\tau_n} \right\} \text{ bounded.}
\end{equation}

\subsection{Regularity of the initial datum}
For a given $\sigma \geq 0$, we estimate the $\dot{H}^\sigma(\R^d)$ norm of the 
initial
datum $\Bar\theta=\theta(0,\cdot)$ as follows:
$$
\begin{aligned}
\| \Bar \theta(\cdot) \|_{\dot{H}^\sigma(\R^d)}
& \stackrel{\eqref{e:sums}}{\leq}
\sum_n \| \rho_n(0,\cdot) \|_{\dot{H}^\sigma(\R^d)}
\stackrel{\eqref{e:pieces}}{=}
\sum_n \gamma_n \left\| \rho \left( 0,\frac{\cdot}{\lambda_n} \right) \right\|_{\dot{H}^\sigma(\R^d)} \\
& \stackrel{\eqref{e:scaleH}}{=}
\sum_n \gamma_n \lambda_n^{\frac{d}{2} - \sigma} \| \rho \|_{\dot{H}^\sigma(\R^d)} \,.
\end{aligned}
$$
Hence, we obtain the further condition
\begin{equation}\label{c:C}
\tag{${\rm C}$}
\Bar\theta \in \dot{H}^\sigma(\R^d)
\qquad \Longleftarrow \qquad
\sum_n \gamma_n \lambda_n^{\frac{d}{2} - \sigma} < \infty \,.
\end{equation}

In Theorem~\ref{t:main}, we require that the solution $\theta$ is bounded, 
which follows if $\Bar\theta$ is bounded. Using
the definition of $\rho_n$ in \eqref{e:pieces} and again the fact that the
cubes $\{ Q_n \}$ have been chosen to be pairwise disjoint gives the following 
implication:
\begin{equation}\label{c:C'}
\tag{$\tilde{\rm C}$}
\theta(t,\cdot) \in L^\infty(\R^d) \text{ uniformly in $t$}
\qquad \Longleftarrow \qquad
\{\gamma_n\} \text{ bounded.}
\end{equation}
In Theorem~\ref{t:small}, we only require that $\theta$ belongs to
$L^2(\R^d)$ uniformly in time, which again follows if $\bar\theta \in 
L^2(\R^d)$. This requirement corresponds to condition
\eqref{c:C} with $\sigma=0$, that is,
\begin{equation}\label{c:Chat}
\tag{$\hat{\rm C}$}
\theta(t,\cdot) \in L^2(\R^d) \text{ uniformly in $t$}
\qquad \Longleftarrow \qquad
\sum_n \gamma_n \lambda_n^{\frac{d}{2}} < \infty \,.
\end{equation}

\subsection{Loss of regularity for the solution}
To show loss of regularity for $\theta$ at any $t>0$, it is enough to show 
that the solution does not belong to $\dot H^s(\R^d)$, for  $0<s<1$, since 
$\theta$ is at least in $L^2$ (by condition \eqref{c:C'} or \eqref{c:Chat} and 
the compact support above) and $H^s (\R^d)= \dot H^s (\R^d) \cap L^2 (\R^d)$ forms a scale of 
spaces ($H^s(\RR^d)\subset H^r(\RR^d)$, $s>r$).
We then fix an arbitrary $0<s<1$, and estimate the norm of $\theta$ in $\Dot 
H^s(\R^d)$ from below as follows: 
$$
\begin{aligned}
\| \theta(t,\cdot) \|^2_{\dot{H}^s(\R^d)}
& \stackrel{\eqref{e:nonlocal}}{\geq}
\limsup_{N\to\infty} \sum_{n=1}^N \left( \| \rho_n(t,\cdot) \|^2_{\dot{H}^s(\R^d)}
-\frac{C_d}{s} \frac{1}{\lambda_n^{2s}} \| \rho_n(t,\cdot) \|^2_{L^2(\R^d)} 
\right) \\
& \stackrel{\eqref{e:pieces}}{=}
\limsup_{N\to\infty} \sum_{n=1}^N  \left( \gamma_n^2 \left\| \rho \left( \frac{t}{\tau_n} , 
\frac{\cdot}{\lambda_n} \right) \right\|^2_{\dot{H}^s(\R^d)}
 -\frac{C_d}{s} \frac{1}{\lambda_n^{2s}} \gamma_n^2
 \left\| \rho \left( \frac{t}{\tau_n} , \frac{\cdot}{\lambda_n} \right) 
\right\|^2_{L^2(\R^d)} \right)
\\
& \stackrel{\eqref{e:scaleH}}{=}
\limsup_{N\to\infty} \sum_{n=1}^N \gamma_n^2 \lambda_n^{d-2s}
\Bigg[
\left\| \rho \left( \frac{t}{\tau_n} , \cdot \right) \right\|^2_{\dot{H}^s(\R^d)}
-\frac{C_d}{s} \left\| \rho \left( \frac{t}{\tau_n} , \cdot \right) \right\|^2_{L^2(\R^d)}
\Bigg] \\
& \stackrel{\eqref{e:expsol}}{\geq}
\limsup_{N\to\infty} \sum_{n=1}^N \gamma_n^2 \lambda_n^{d-2s}
\Bigg[
C_s^2 \exp \left( \frac{2s c t}{\tau_n} \right) - \frac{C_d \hat{C}_0}{s}
\Bigg] \,,
\end{aligned}
$$
where again we have referenced the property used at each step. Here, we exploit an almost orthogonality property in $\dot{H}^s$ for functions with disjoint support, established in Lemma \ref{l:orto} in Appendix \ref{s:prelim}.   
In the last inequality,  we have denoted 
the $L^2(\R^d)$ norm of $\rho$, which is conserved in time,  by $\hat{C}_0$.
Therefore, since we require $\tau_n \downarrow 0$, for any given $t>0$ we have
\begin{equation}\label{c:D}
\tag{${\rm D}$}
\theta(t,\cdot) \not \in \dot{H}^s(\R^d)
\qquad \Longleftarrow \qquad
\sum_n \gamma_n^2 \lambda_n^{d-2s}
\exp \left( \frac{2s c t}{\tau_n} \right)
= \infty \,.
\end{equation}

\subsection{Verify that $\theta$ is a solution of the continuity equation with 
velocity $v$}
We assume that the sequences $\{\lambda_n\}$, $\{\tau_n\}$, and $\{\gamma_n\}$ 
have been chosen
so that conditions \eqref{c:B}, \eqref{c:B'}, and \eqref{c:C'} (or 
\eqref{c:Chat}) are satisfied. Then, the
series in \eqref{e:sums} converge strongly in $L^2_{\loc}(\R^d)$ on any 
finite time interval. This fact, together
with the validity of \eqref{e:eqn} and the fact that the cubes $\{ Q_n \}$ have been chosen
to be pairwise disjoint, implies the validity of \eqref{e:sumsol}.

\subsection{Proof of Theorem~\ref{t:main}} We are now in the position to 
complete the proof of our main result.

In the case $r=1$, condition~\eqref{c:B} can be restated as 
\begin{equation}\label{c:Bhat}
\tag{$\hat{\rm B}$}
v \in L^\infty((0,\infty);\dot{W}^{1,p}(\R^d))
\qquad \Longleftarrow \qquad
\sum_n \frac{\lambda_n^{\frac{d}{p}}}{\tau_n} < \infty \,.
\end{equation}
Our task is to find sequences $\{ \lambda_n\}$, $\{ \tau_n \}$, and $\{ \gamma_n \}$ such that conditions \eqref{c:A}, \eqref{c:Bhat}, \eqref{c:B'}, \eqref{c:C}, \eqref{c:C'}, and \eqref{c:D} are satisfied, for any $1<p<\infty$, $\sigma>0$, $s>0$, and $t>0$. Choosing
\begin{equation}
\label{e:choice1}
\tau_n = \frac{1}{n^3} \qquad \text{ and } \qquad \lambda_n = e^{-n}\,,
\end{equation}
we immediately see that~\eqref{c:A}, \eqref{c:Bhat}, and~\eqref{c:B'} are satisfied. If we take
\begin{equation}
\label{e:choice2}
\gamma_n = e^{-n^2},
\end{equation}
we see that~\eqref{c:C} and~\eqref{c:C'} are satisfied. We are left with 
having to check condition~\eqref{c:D}. With the choices in~\eqref{e:choice1} 
and~\eqref{e:choice2} for the parameters, the series appearing in condition 
\eqref{c:D} reads:
$$
\sum_n \exp(-2n^2) \,
\exp \big( -(d-2s) n \big) \,
\exp (2sct n^3),
$$
which clearly diverges since $s c  t >0$. This concludes the proof of 
Theorem~\ref{t:main}. \hfill \qed

\subsection{Proof of Theorem~\ref{t:small}} The proof is a variation of the previous one.
We fix $T>0$ and choose
$$
\tau_n = \frac{1}{n}\,.
$$
We want to check \eqref{c:B} for $0 \leq t \leq T$. Such a condition now reads
\begin{equation}\label{e:BT}
\sum_n n \lambda_n^{1-r+\frac{d}{p}}
\exp \big( n (r-1) b T \big) < \infty \,.
\end{equation}
By the assumption $p< \frac{d}{r-1}$ (expressing the fact that we consider a 
Sobolev space that does not embed in the Lipschitz class), 
we have that
$$
\beta = 1-r+\frac{d}{p} > 0 \,.
$$
For $\alpha>0$ to be determined, we choose
$$
\lambda_n = \exp ( - \alpha T n) \,.
$$
Then, condition \eqref{e:BT} becomes
$$
\sum_n n \exp ( - \alpha \beta T n )
\exp \big( n (r-1)bT \big) < \infty \,.
$$
This series indeed converges provided that
\begin{equation}
\label{e:choicealpha}
- \alpha \beta T + (r-1)bT < 0 \,, \quad \text{ i.e. }\quad
\alpha > \frac{(r-1)b}{\beta} \,,
\end{equation}
which is admissible. We remark that the parameters $\alpha$ and $\beta$ can be chosen independent of $T$, since $b$ is independent of $T$.  Under this condition, \eqref{c:B} holds and
also \eqref{c:A} and \eqref{c:B'} follow.

With this choice of $\lambda_n$, condition \eqref{c:C} becomes
$$
\sum_n \gamma_n \exp \left( \alpha \left( \sigma - \frac{d}{2} \right) T n
\right)< \infty \,.
$$
We can guarantee that this condition holds if
$$
\gamma_n = \frac{1}{n^2} \exp \left( \alpha \left( \frac{d}{2} -\sigma \right) T 
n \right).
$$
Moreover, condition \eqref{c:Chat} becomes
$$
\sum_n \frac{1}{n^2} \exp \left( \alpha \left( \frac{d}{2} -\sigma \right) T n \right)
\Bigg[ \exp ( - \alpha T n ) \Bigg]^{\frac{d}{2}}
=
\sum_n \frac{1}{n^2} \exp \left( - \alpha  \sigma  T n \right)
< \infty \,,
$$
and we can verify that it is satisfied. We note in passing that, in the case 
$\sigma>d/2$,
the stronger condition \eqref{c:C'} in fact holds.

We now substitute all the above expressions into \eqref{c:D} and obtain that 
this condition is equivalent to having: 
$$
\begin{aligned}
 \sum_n \left( \gamma_n \lambda_n^{\frac{d}{2} - \sigma} \right)^2 \lambda_n^{2(\sigma-s)}
 \exp \left( \frac{2s c t}{\tau_n} \right)
= & \sum_n \frac{1}{n^4}  \exp \big( - 2 (\sigma - s) \alpha T n \big)
\exp \big( 2s c t n \big) \\
= & \sum_n \frac{1}{n^4}  \exp \Big( 2 n \big(-  (\sigma - s) \alpha T  + s c t \big)  \Big)
= \infty \,.
\end{aligned}
$$
The above series diverges if
$$
-  (\sigma - s) \alpha T  + s c t > 0\,,
$$
which is (trivially) the case for any $t \geq 0$ when $s>\sigma$. In the relevant case $0<s \leq \sigma$, 
we see
that the norm of the solution in $\dot{H}^s(\R^d)$ becomes infinity only after 
a time $(\sigma - s) \alpha T / s c $.
Recalling that we control the Sobolev norm of the velocity field up to time $T$ only,
only those regularity indices $s$ that satisfy
\begin{equation}
\label{e:percent}
\frac{(\sigma - s) \alpha T}{s c }
< T
\qquad \Longleftrightarrow \qquad
(\sigma - s) \alpha
< s c  \,,
\end{equation}
lead to blow up of the norm.
Recalling~\eqref{e:choicealpha}, we can rewrite~\eqref{e:percent} as
$$
\frac{(\sigma-s)(r-1)b}{s c  \beta} < 1
\qquad \Longleftrightarrow \qquad
\frac{s}{\sigma} > 1- \frac{c\beta}{c\beta + (r-1)b} =: \bar\mu \,.
$$
We observe that, with this definition, $\bar\mu$ is independent of $T$, as again $\beta$ and $b$ are, and it also satisfies 
$0<\bar{\mu}<1$. Therefore, the amount of regularity for the solution that 
can be lost is $\bar\mu\sigma$. Theorem \ref{t:small} is now proved. \hfill \qed

\begin{remark} 
 Although $\bar\mu$ is independent of $T$ and $T$ is chosen arbitrarily, we cannot conclude from our construction that such regularity is lost instantaneously by choosing a sequence of times $T=T_n\to 0$, as both the velocity field and the initial data, hence the entire solution, depend on $T$ in a fundamental way.
\end{remark}

\appendix

\section{Fractional Sobolev spaces and almost orthogonality} \label{s:prelim}

%We begin by recalling some basic definitions and
We recall some basic definitions and 
properties of homogeneous Sobolev spaces of real order that are used extensively 
in this paper. For a complete exposition we refer the reader for instance to \cite{chemin,palatucci,grafakos,triebel}.

\medskip

Let $s \geq 0$ and $1<p<\infty$. We say that a tempered distribution $f \in \mathcal{S}'(\R^d)$
belongs to the homogeneous Sobolev space $\dot{W}^{s,p}(\R^d)$ if the Fourier transform $\hat f$
of $f$ belongs to $L^1_\loc(\R^d)$ and
\begin{equation}\label{e:defW}
\mathcal{F}^{-1} \left( |\xi|^s \hat{f}(\xi) \right)
\in L^p(\R^d) \,.
\end{equation}
If this is the case, we define $\left\| f  \right\|_{\dot{W}^{s,p}(\R^d)}$ to be the
$L^p(\R^d)$ norm of the function in \eqref{e:defW}.

If $s = k \in \N$, then $\dot{W}^{k,p}(\R^d) \cap L^p(\R^d)$ coincides
with the usual Sobolev space $W^{k,p}(\R^d)$ consisting of those $L^p(\R^d)$ functions possessing
weak derivatives of order less or equal than $k$ in $L^p(\R^d)$. This 
equivalence however does not hold in the
borderline cases $p=1$ and $p=\infty$. 
We also recall the well-known fact that $\Dot W^{r,p}(\R^d)$ continuously 
embeds into the space of Lipschitz continuous functions provided $p>\frac{d}{r-1}$.

%\medskip

Throughout this work, we make frequent use  of  the behavior of homogeneous Sobolev norms of a given function $f$
under rescaling . If we set
$$
f_\lambda(x) = f \left(\frac{x}{\lambda} \right) \,,
$$
then it holds
\begin{equation}\label{e:scaleW}
\left\| f_\lambda \right\|_{\dot{W}^{s,p}(\R^d)}
=
\lambda^{\frac{d}{p}-s} \left\| f  \right\|_{\dot{W}^{s,p}(\R^d)} \,.
\end{equation}

%\medskip

In the particular case $p=2$ we use the notation
$\dot{H}^s(\R^d)$. For $s \in \R$, a tempered distribution $f \in \mathcal{S}'(\R^d)$
belongs to the homogeneous Sobolev space $\dot{H}^s(\R^d)$ if the Fourier transform $\hat f$
of $f$ belongs to $L^1_\loc(\R^d)$ and
\begin{equation}\label{e:defH}
\| f \|^2_{\dot{H}^s(\R^d)} = \int_{\R^d} | \xi |^{2s} | \hat{f}(\xi) |^2 \, d\xi < \infty \,.
\end{equation}
As before, it holds
\begin{equation}\label{e:scaleH}
\left\| f_\lambda \right\|_{\dot{H}^s(\R^d)}
=
\lambda^{\frac{d}{2}-s} \left\| f  \right\|_{\dot{H}^s(\R^d)} \,.
\end{equation}

%\medskip

In fact, more general definitions can be given removing the requirement that $\hat{f} \in L^1_\loc(\R^d)$,
setting the entire theory in the context of tempered distributions modulo polynomials (as done, for instance, in~\cite{grafakos}).
In this paper, we always deal with bounded functions with compact support, for which the
local summability of the Fourier transform is guaranteed. Therefore we prefer to follow the
somehow simplified approach above.

In our work, homogeneous spaces are used only to measure the ``size'' of 
given functions and velocity fields, which in fact will be typically regular 
but with large norm. In particular, the issue of completeness (which holds only 
for specific values of $s$ given our definition) and seminorms will not arise. 
With abuse of language, we then refer to seminorms as norms.

The following interpolation property holds. If $s_1 < s < s_2$ and $s = 
\vartheta s_1 + (1-\vartheta) s_2$, then
\begin{equation}\label{e:interpolation}
\| f \|_{\dot{H}^{s}(\R^d)}
\leq
\| f \|^\vartheta_{\dot{H}^{s_1}(\R^d)}
\| f \|^{1-\vartheta}_{\dot{H}^{s_2}(\R^d)} \,.
\end{equation}
In particular $\dot{H}^{s_1}(\R^d) \cap 
\dot{H}^{s_2}(\R^d) \subset \dot{H}^{s}(\R^d)$. Analogous inequalities hold for $\dot{W}^{s,p}(\R^d)$, but 
we will not need them in this paper.

\medskip

We recall that, for every $0<s<1$, the $\dot{H}^s(\R^d)$ (semi) norm is
equivalent (up to a dimensional factor) to the Gagliardo (semi) norm, given by :
\begin{equation}\label{e:gagliardo}
\left( \int_{\R^d} \int_{\R^d} \frac{| f(x) - f(y)|^2}{|x-y|^{d+2s}} \, dx \, dy \right)^{1/2} \,.
\end{equation}
We recall that one can equivalently introduce Gagliardo (semi) norms that are 
$L^p$ based for  $1\leq p <\infty$:
\[
   \left( \int_{\R^d} \int_{\R^d} \frac{| f(x) - f(y)|^p}{|x-y|^{d+sp}} \, dx 
\, dy \right)^{1/p} \,.
\]
However, the space of $L^p$ functions for which this norm is finite is a 
Besov space, namely the space $\Dot B^s_{p,p}$, and $\Dot{W}^{s,p} = \Dot 
B^s_{p,p}$ if and only if $p=2$ (see e.g. \cite[Chapter 2]{triebel}). This is 
one of the reasons why we state the loss of regularity in $L^2$-Sobolev spaces 
only.

Using \eqref{e:gagliardo}, we prove a localization property in $\Dot H^s$ in 
the following sense. If a function has compact support, then we exhibit a 
quantitative relation between the Sobolev norm of this function on the whole 
space and the Sobolev norm of the function
localized on a set containing its support. This lemma is essentially Lemma 
5.1 in \cite{palatucci}, and we include a proof here for the reader's sake.

\begin{lemma}
Let $0<s<1$. Let $K \subset \Omega \subset \R^d$ and assume ${\rm 
dist}\,(K,\Omega^c) = \lambda>0$. Then, for every function $f \in 
\dot{H}^s(\R^d)$ with support contained in $K$, 
$$
\int_{\R^d} \int_{\R^d} \frac{| f(x) - f(y)|^2}{|x-y|^{d+2s}} \, dx \, dy
\leq
\int_{\Omega} \int_{\Omega} \frac{| f(x) - f(y)|^2}{|x-y|^{d+2s}} \, dx \, dy
+
\frac{C_d}{s} \frac{1}{\lambda^{2s}} \| f \|_{L^2(\R^d)}^2 \,,
$$
where $C_d$ is a dimensional constant.
\end{lemma}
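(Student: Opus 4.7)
The plan is to split the double integral over $\R^d\times\R^d$ into four regions according to whether the variables lie in $\Omega$ or in $\Omega^c$, use the support condition $\spt f\subset K$ to eliminate or simplify several of these pieces, and then estimate the remaining cross terms by a direct polar-coordinate computation that takes advantage of the distance $\lambda$ separating $K$ from $\Omega^c$.

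More precisely, I would write
\[
\int_{\R^d}\!\int_{\R^d}\frac{|f(x)-f(y)|^2}{|x-y|^{d+2s}}\,dx\,dy
=I_1+I_2+I_3+I_4,
\]
where the $I_j$'s correspond respectively to the regions $\Omega\times\Omega$, $\Omega\times\Omega^c$, $\Omega^c\times\Omega$, and $\Omega^c\times\Omega^c$. The term $I_1$ is exactly the first term on the right-hand side of the claimed inequality. Since $\spt f\subset K\subset\Omega$, the integrand in $I_4$ vanishes identically. For $I_2$, again $f(y)=0$ for $y\in\Omega^c$, and the remaining factor $|f(x)|^2$ is supported in $K$, so
\[
I_2=\int_K |f(x)|^2\left(\int_{\Omega^c}\frac{dy}{|x-y|^{d+2s}}\right)dx,
\]
and the symmetric expression holds for $I_3$.

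The key estimate is then on the inner integral. For any $x\in K$, the hypothesis $\mathrm{dist}(K,\Omega^c)=\lambda$ forces $|x-y|\geq \lambda$ for every $y\in\Omega^c$, hence the domain of integration is contained in $\{y:|x-y|\geq\lambda\}$. Passing to polar coordinates around $x$ and using the surface measure $\omega_{d-1}$ of the unit sphere in $\R^d$ gives
\[
\int_{\Omega^c}\frac{dy}{|x-y|^{d+2s}}
\leq \omega_{d-1}\int_\lambda^\infty r^{-1-2s}\,dr
=\frac{\omega_{d-1}}{2s}\,\lambda^{-2s}.
\]
Inserting this bound into $I_2$ (and similarly $I_3$) and adding the two contributions yields $I_2+I_3\leq \dfrac{\omega_{d-1}}{s}\lambda^{-2s}\|f\|_{L^2(\R^d)}^2$, which is exactly the remainder term with $C_d=\omega_{d-1}$.

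I do not expect any serious obstacle here: the argument is just a careful bookkeeping of the four domains together with one elementary radial integral that converges precisely because $s>0$ and whose value explains both the factor $1/s$ and the factor $\lambda^{-2s}$. The only minor care is to note that the condition $s<1$ is needed in order to identify the left-hand side with the square of the $\dot H^s$ (semi)norm via \eqref{e:gagliardo}, but is not used in the chain of inequalities above.
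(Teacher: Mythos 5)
Your proof is correct and follows essentially the same route as the paper's: both reduce the claim to the two cross terms over $\Omega\times\Omega^c$ (the paper writes them as $2\int_{\Omega^c}\int_\Omega$, you as $I_2+I_3$), use $\spt f\subset K$ and ${\rm dist}(K,\Omega^c)=\lambda$ to restrict the inner integral to $\{|x-y|\geq\lambda\}$, and evaluate the same radial integral $\int_\lambda^\infty r^{-1-2s}\,dr=\lambda^{-2s}/(2s)$. No gaps; your closing remark that $s<1$ is only needed to identify the Gagliardo expression with the $\dot H^s$ seminorm is also accurate.
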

\begin{proof}
Since $f$ vanishes in the complement of $\Omega$, we have
$$
\int_{\R^d} \int_{\R^d} \frac{| f(x) - f(y)|^2}{|x-y|^{d+2s}} \, dx \, dy
=
\int_{\Omega} \int_{\Omega} \frac{| f(x) - f(y)|^2}{|x-y|^{d+2s}} \, dx \, dy
+
2 \int_{\Omega^c} \int_{\Omega} \frac{| f(x) |^2}{|x-y|^{d+2s}} \, dx \, dy \,.
$$
Therefore we need to estimate the last integral. We can compute
$$
\begin{aligned}
\int_{\Omega^c} \int_{\Omega} \frac{| f(x) |^2}{|x-y|^{d+2s}} \, dx \, dy
& =
\int_{\Omega}  |f(x)|^2 \int_{\Omega^c}  \frac{{\mathbf 1}_K(x)}{|x-y|^{d+2s}} \, dy \, dx \\
& \leq
\int_{\Omega}  |f(x)|^2 \int_{B(x,\lambda)^c}  \frac{{\mathbf 1}_K(x)}{|x-y|^{d+2s}} \, dy \, dx \\
& =
C_d \int_{\Omega}  |f(x)|^2 \int_\lambda^\infty \frac{r^{d-1}}{r^{d+2s}} \, dr
=
\frac{C_d}{2s} \frac{1}{\lambda^{2s}} \| f \|_{L^2(\R^d)}^2  \,. 
\end{aligned}
$$
This concludes the proof.
\end{proof}

We use the previous lemma to show a sort of ``orthogonality property'' for
Sobolev functions with disjoint supports.

\begin{lemma}
\label{l:orto}
Let $0<s<1$. For $i=1,2$, let $K_i \subset \Omega_i \subset \R^d$ and assume ${\rm dist}\,(K_i,\Omega_i^c) = \lambda_i > 0$. Moreover, assume  $\Omega_1 \cap \Omega_2 = \emptyset$. Then, given
$f_i \in \dot{H}^s(\R^d)$ with support contained in $K_i$, there exists a 
dimensional constant $C_d$ such that:
$$
\| f_1 + f_2 \|^2_{\dot{H}^s(\R^d)}
\geq
\| f_1 \|^2_{\dot{H}^s(\R^d)}
+
\| f_2 \|^2_{\dot{H}^s(\R^d)}
- \frac{C_d}{s}
\Bigg[ \frac{1}{\lambda_1^{2s}} \| f_1 \|_{L^2(\R^d)}^2 +
\frac{1}{\lambda_2^{2s}} \| f_2 \|_{L^2(\R^d)}^2 \Bigg] \,.
$$
The above formula generalizes to:
\begin{equation}\label{e:nonlocal}
\left\| \sum_n f_n \right\|^2_{\dot{H}^s(\R^d)}
\geq
\limsup_{N \to \infty}
\sum_{n=1}^N \left[
\| f_n \|^2_{\dot{H}^s(\R^d)}
- \frac{C_d}{s}
 \frac{1}{\lambda_n^{2s}} \| f_n \|_{L^2(\R^d)}^2  \right] \,.
\end{equation}
\end{lemma}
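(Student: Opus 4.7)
The plan is to work through the Gagliardo seminorm characterization \eqref{e:gagliardo}, which is available since $0<s<1$, and to use the previous localization lemma as the key input. The crucial structural hypothesis I would exploit is not just the disjointness of the supports $K_i$ but the stronger assumption $\Omega_1 \cap \Omega_2 = \emptyset$, which will allow me to kill the cross terms by simply restricting the domain of integration.

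First I would write $\|f_1+f_2\|^2_{\dot H^s(\R^d)}$ (up to a dimensional constant) as the double integral of $|(f_1+f_2)(x)-(f_1+f_2)(y)|^2/|x-y|^{d+2s}$ over $\R^d\times\R^d$, and restrict to the subset $(\Omega_1\times\Omega_1)\cup(\Omega_2\times\Omega_2)$; since the integrand is nonnegative, this produces a lower bound. Because $\Omega_1$ and $\Omega_2$ are disjoint and $\mathrm{spt}(f_i)\subset K_i\subset\Omega_i$, the function $f_2$ vanishes on $\Omega_1$ and vice versa, so on $\Omega_i\times\Omega_i$ the increment $(f_1+f_2)(x)-(f_1+f_2)(y)$ reduces to $f_i(x)-f_i(y)$. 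The lower bound therefore splits as a sum of two localized Gagliardo-type integrals, one for $f_1$ on $\Omega_1\times\Omega_1$ and one for $f_2$ on $\Omega_2\times\Omega_2$.

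At this stage I would invoke the preceding lemma (in its Gagliardo form), rearranged so as to bound the localized integral on $\Omega_i\times\Omega_i$ from below: the full-space Gagliardo integral (equivalent to $\|f_i\|^2_{\dot H^s}$) differs from the localized one by at most $C_d\lambda_i^{-2s}\|f_i\|^2_{L^2}/s$. Absorbing the Gagliardo constants into $C_d$ and summing over $i=1,2$ yields exactly the first displayed inequality of the statement. For the countable version \eqref{e:nonlocal}, I would repeat the same scheme for every finite $N$: restrict the defining double integral of $\|\sum_n f_n\|^2_{\dot H^s}$ to the disjoint union $\bigcup_{n=1}^N(\Omega_n\times\Omega_n)$, use pairwise disjointness of the $\Omega_n$ to see that only $f_n$ contributes on the $n$-th diagonal piece, apply the localization lemma term by term, and finally let $N\to\infty$ with $\limsup$.

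I do not expect any real obstacle beyond careful bookkeeping of supports; the main point is that the separation between each $K_i$ and $\Omega_i^c$ must be leveraged twice: once to discard the long-range cross terms via the $\Omega_i$-disjointness, and once through the previous lemma, which quantifies how much $\dot H^s$-energy can escape the enlarged set $\Omega_i$. The statement is stated only for $0<s<1$ precisely because one needs the Gagliardo representation, which is no longer equivalent to the Fourier-side definition for $s\geq 1$.
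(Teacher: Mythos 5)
Your argument is exactly the paper's proof: restrict the Gagliardo double integral to the disjoint union $(\Omega_1\times\Omega_1)\cup(\Omega_2\times\Omega_2)$, note that only $f_i$ survives on the $i$-th block, and then apply the preceding localization lemma to each block; the countable case is handled the same way for each finite $N$ followed by a $\limsup$. The proposal is correct and requires no changes.
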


We note that, for $s=0$, $s=1$, we have true orthogonality:
$$
\| f_1 + f_2 \|^2_{\dot{H}^s(\R^d)}
=
\| f_1 \|^2_{\dot{H}^s(\R^d)}
+
\| f_2 \|^2_{\dot{H}^s(\R^d)}
\,,
$$
owing to the local nature of the corresponding norms.

\begin{proof}[Proof of Lemma~\ref{l:orto}]
It is enough to observe that
$$
\begin{aligned}
\int_{\R^d} \int_{\R^d} & \frac{| f_1(x) +f_2(x) - f_1(y) - f_2(y)|^2}{|x-y|^{d+2s}} \, dx \, dy \\
& \geq
\int_{\Omega_1} \int_{\Omega_1} \frac{| f_1(x)  - f_1(y) |^2}{|x-y|^{d+2s}} \, dx \, dy
+
\int_{\Omega_2} \int_{\Omega_2} \frac{| f_2(x)  - f_2(y) |^2}{|x-y|^{d+2s}} \, dx \, dy
\end{aligned}
$$
and to apply the previous lemma.
\end{proof}

\bibliography{ACMloss_reg}
\bibliographystyle{abbrv}

\end{document}